\definecolor{lightblue}{rgb}{.90,.95,1}
\DeclareMathOperator*{\argmin}{arg\,min}
\DeclareMathOperator*{\argmax}{arg\,max}
\newtheorem{remark}[theorem]{Remark}
\title{On the controlled eigenvalue problem for stochastically perturbed multi-channel systems
\thanks{Received by the editors January 15, 2015. This research was carried-out while the author was working at the University of Notre Dame. The author acknowledges support from the Department of Electrical Engineering, University of Notre Dame.}}
\author{Getachew K. Befekadu\footnotemark[2]}
\begin{document}
\maketitle

\renewcommand{\thefootnote}{\fnsymbol{footnote}}

\footnotetext[2]{NRC/AFRL \& Department of Industrial System Engineering, University of Florida - REEF, 1350 N. Poquito Rd, Shalimar, FL 32579, USA ({\tt gbefekadu@ufl.edu}).}

\renewcommand{\thefootnote}{\arabic{footnote}}

\begin{abstract}
In this brief paper, we consider the problem of minimizing the asymptotic exit rate of diffusion processes from an open connected bounded set pertaining to a multi-channel system with small random perturbations. \,Specifically, we establish a connection between: (i) the existence of an invariant set for the unperturbed multi-channel system w.r.t. certain class of state-feedback controllers; and (ii) the asymptotic behavior of the principal eigenvalues and the solutions of the Hamilton-Jacobi-Bellman (HJB) equations corresponding to a family of singularly perturbed elliptic operators. Finally, we provide a sufficient condition for the existence of a Pareto equilibrium (i.e., a set of optimal exit rates w.r.t. each of input channels) for the HJB equations -- where the latter correspond to a family of nonlinear controlled eigenvalue problems.
\end{abstract}

\begin{keywords} 
Diffusion process, HJB equations, multi-channel system, principal eigenvalue, optimal exit time, small random perturbations
\end{keywords}

\begin{AMS}
37C75, 37H10, 34D15, 34D05, 34D10, 49L25, 34H05
\end{AMS}

\pagestyle{myheadings}
\thispagestyle{plain}
\markboth{GETACHEW K. BEFEKADU}{On the controlled eigenvalue problem}

\section{Introduction}

Consider the following continuous-time multi-channel system\footnote{This work is, in some sense, a continuation of our previous paper \cite{BefAn15a}.}
\begin{equation}
 \dot{x}(t) = A x(t) + \sum\nolimits_{i=1}^n B_i u_i(t), \quad x(0)=x_0, \label{Eq1.1}
\end{equation}
where $x \in \mathcal{X} \subseteq \mathbb{R}^d$ is the state of the system, $u_i \in \mathcal{U}_i \subset \mathbb{R}^{r_i}$ is the control input to the $i$th-channel, and $A \in \mathbb{R}^{d \times d}$ and $B_i \in \mathbb{R}^{d \times r_i}$, for $i=1,2, \ldots, n$, are constant matrices.

Let $D \subset \mathcal{X}$ be an open connected bounded set with smooth boundary. For the multi-channel system in \eqref{Eq1.1}, we consider the following class of state-feedback controllers
\begin{align}
\Gamma \subseteq \Bigl \{\bigl(\gamma_1, \gamma_2, \ldots, \gamma_n\bigr) \in \prod\nolimits_{i=1}^n \mathbb{R}^{r_i}\, \Bigl \lvert\,\Lambda(D) \neq \emptyset  \Bigr\},  \label{Eq1.2}
\end{align}
where $\Lambda(D) \subset D \cup \partial D$ is the maximal invariant set (under the action of $S^t \triangleq \exp \bigl\{\bigl( A + \sum_{i=1}^n B_i \gamma_i\bigr)t\bigr\}$,\, $t \ge 0$) such that
\begin{align}
S^t \Omega = \Omega \subset \Lambda(D), \quad \forall t \ge 0, \label{Eq1.3}
\end{align}
for any set $\Omega$.

In what follows, we provide a connection between the existence of an invariant set for the system $S^t$ in $D \cup \partial D$ and the asymptotic behavior of the principal eigenvalue for singularly perturbed elliptic operator which is associated with the following stochastic differential equation (SDE) 
\begin{equation}
  dX^{\epsilon,\gamma}(t) = \Bigl (A + \sum\nolimits_{i=1}^n B_i \gamma_i\Bigr)X^{\epsilon,\gamma}(t) dt + \sqrt{\epsilon} \sigma\bigl(X^{\epsilon,\gamma}(t)\bigr) dW(t), \,\, X^{\epsilon,\gamma}(0)=x_0, \label{Eq1.4}
\end{equation}
where
\begin{itemize}
\item[-] $X^{\epsilon,\gamma}(\cdot)$ is an $\mathbb{R}^{d}$-valued diffusion process, $\epsilon$ is a small parameter lying in an interval $(0, \epsilon^{\ast})$ (which represents the level of random perturbation in the system), and $\bigl(\gamma_1, \gamma_2, \ldots, \gamma_n\bigr)$ is an $n$-tuple of state-feedbacks from the class $\Gamma$,
\item[-] $\sigma \colon \mathbb{R}^{d} \rightarrow \mathbb{R}^{d \times d}$ is Lipschitz continuous with the least eigenvalue of $\sigma(\cdot)\sigma^T(\cdot)$ uniformly bounded away from zero, i.e., 
\begin{align*}
 \sigma(x)\sigma^T(x)  \ge \kappa I_{d \times d} , \quad \forall x \in \mathbb{R}^{d},
\end{align*}
for some $\kappa > 0$, and
\item[-] $W(\cdot)$ (with $W(0)=0$) is an $d$-dimensional standard Wiener process.
\end{itemize}

For any fixed $\epsilon \in (0,\epsilon^{\ast})$, let $\tau^{\epsilon,\gamma}$ be the first exit time of the diffusion process $X^{\epsilon,\gamma}(t)$ from the set $D$, i.e.,
\begin{equation}
  \tau^{\epsilon,\gamma} = \inf \Bigl \{ t > 0 \, \Bigl \vert \, X^{\epsilon,\gamma}(t) \notin D \Bigr\}. \label{Eq1.5}
\end{equation}
Further, let $P_{\epsilon,0}^{x_0,\gamma}\bigl\{ \mathcal{A}\bigr \}$ and $E_{\epsilon,0}^{x_0,\gamma}\bigl\{\xi \bigr\}$, as usual, denote the probability of an even $\mathcal{A}$ and the expectation of a random variable $\xi$, respectively, for the diffusion process $X^{\epsilon,\gamma}(t)$ starting from $x_0 \in D$. 

Here, it is worth mentioning that, in system reliability analysis and other studies, one often requires to confine a controlled diffusion process $X^{\epsilon,\gamma}(t)$ to a given open connected bounded set $D$ as long as possible. A standard formulation for such a problem is to maximize the mean exit time $E_{\epsilon,0}^{x_0,\gamma}\bigl\{\tau^{\epsilon,\gamma} \bigr\}$ from the set $D$. We also observe that a more suitable objective would be to minimize the asymptotic rate with which the diffusion process $X^{\epsilon,\gamma}(t)$ exits from the set $D$. Further, this suggests minimizing the principal eigenvalue $\lambda_{\epsilon,0}^{\gamma}$ 
\begin{equation}
 \lambda_{\epsilon, 0}^{\gamma} = - \limsup_{t \rightarrow \infty} \frac{1}{t} \log P_{\epsilon,0}^{x_0,\gamma}\Bigl\{\tau^{\epsilon,\gamma} > t \Bigr\}, \label{Eq1.6}
\end{equation}
with respect to certain class of admissible controls (including the above class of state-feedbacks in \eqref{Eq1.2}).\footnote{Recently, the authors in \cite{AraBB16} have provided interesting results on controlled equilibrium selection in stochastically perturbed dynamics, but in a slightly different context.} Note that if the domain $D$ contains an equilibrium point for the deterministic multi-channel system in \eqref{Eq1.1}, when such a system is composed with the $n$-tuple of state-feedbacks from $\Gamma$, then the principal eigenvalue $\lambda_{\epsilon,0}^{\gamma}$, which is associated with the singularly perturbed elliptic operator
\begin{equation}
  -\mathcal{L}_{\epsilon,0}^{\gamma} \bigl(\cdot\bigr) \bigl( x \bigr) = \Bigl \langle \triangledown \bigl(\cdot\bigr),\, \bigl( A + \sum\nolimits_{i=1}^n B_i \gamma_i\bigr)x \Bigr \rangle + \frac{\epsilon}{2} \operatorname{tr} \Bigl \{\sigma(x) \sigma^T(x)  \triangledown^2 \bigl(\cdot\bigr) \Bigr\}, \label{Eq1.7}
\end{equation}
with zero boundary conditions on $\partial D$, satisfies (see also Corollary~\ref{C1})
\begin{align*}
- \log \lambda_{\epsilon,0}^{\gamma} = \epsilon^{-1} r^{\gamma} + o(\epsilon^{-1}) \quad {\text as} \quad \epsilon \rightarrow 0,
\end{align*}
where $r^{\gamma}$ is given by the following
\begin{align}
r^{\gamma} = \limsup_{T \rightarrow \infty} \,\inf_{\varphi(t) \in \Psi} \frac{1}{T} \biggl \{ I_{T}^{\gamma}(\varphi(t)) \, \Bigl \vert \, \varphi(t) \in D \cup \partial D,\,\,  t \in [0, T] \biggr\} \label{Eq1.8}
\end{align}
and the action functional $I_{T}^{\gamma}(\varphi)$, with $(\gamma_1, \gamma_2, \ldots, \gamma_n) \in \Gamma$, is given by
\begin{align}
 I_{T}^{\gamma}(\varphi(t)) =  \frac{1}{2} \int_{0}^{T} \biggl[\frac{d\varphi(t)}{dt} &-  \bigl(A +  \sum\nolimits_{i=1}^n B_i \gamma_i\bigr)\varphi(t) \biggr]^T \Bigl(\sigma(\varphi(t))\sigma^T(\varphi(t))\Bigr)^{-1}\notag \\
&  \quad\quad\quad  \times  \biggl[\frac{d\varphi(t)}{dt} -  \bigl(A +  \sum\nolimits_{i=1}^n B_i \gamma_i\bigr)\varphi(t) \biggr] dt, \label{Eq1.9}
\end{align}
where the set $\Psi$ consisting of all absolutely continuous functions $\varphi \in C_T([0,T], \mathbb{R}^d)$, with $\varphi(0)=x_0$, a compact set in $D$ (e.g., see \cite{VenFre70}, \cite{Ven72}, \cite[Chapter~14]{Fre06} or \cite{FreWe84} for additional discussions).
 
On the other hand, if the maximum invariant set $\Lambda(D)$ for the deterministic multi-channel system in \eqref{Eq1.1}, w.r.t. the state-feedbacks from $\Gamma$, is nonempty. Then, the following asymptotic condition also holds true (see Lemma~\ref{L1} in the Appendix section (cf. \cite[Theorem~2.1]{Kif81}))
\begin{align}
-\lim_{\epsilon \rightarrow 0} \, \limsup_{t \rightarrow \infty} \frac{1} {t} \log P_{\epsilon,0}^{x_0,\gamma} \bigl\{\tau^{\epsilon,\gamma} > t \bigr\} < \infty, \,\,x_0 \in D. \label{Eq1.10}
\end{align}
Moreover, the principal eigenvalue $\lambda_{\epsilon,0}^{\gamma}$ turns out to be the boundary value between those $R < r^{\gamma}$ for which $E_{\epsilon,0}^{x_0,\gamma}\bigl\{\exp(\epsilon^{-1} R \tau^{\epsilon,\gamma})\bigr\} < \infty$ and those $R > r^{\gamma}$ for which $E_{\epsilon,0}^{x_0,\gamma}\bigl\{\exp(\epsilon^{-1} \\R \tau^{\epsilon,\gamma})\bigr\} = \infty$ (see also \cite[pp.~373--382]{Fre06} for additional discussions). Note that estimating the asymptotic exit rate with which the controlled-diffusion process $X^{\epsilon,\gamma}(t)$ exits from the domain $D$ is also related to a singularly perturbed eigenvalue problem. For example, the asymptotic behavior for the principal eigenvalue corresponding to the following eigenvalue problem
\begin{align}
\left.\begin{array}{c}
\mathcal{L}_{\epsilon,0}^{\gamma^{\ast}}\psi_{\gamma^{\ast}}^{\ast}(x) + \lambda_{\epsilon,0}^{\gamma^{\ast}}\,\psi_{\gamma^{\ast}}^{\ast}(x) = 0, \quad \forall x \in D\\
\quad\quad ~ \psi_{\gamma^{\ast}}^{\ast}(x) = 0, \quad \forall x \in \partial D\\
\end{array} \right \} \label{Eq1.11}
\end{align}
where $\psi_{\gamma^{\ast}}^{\ast} \in W_{loc}^{2, p}(D) \cap C(D \cup \partial D)$, for $p > 2$, with $\psi_{\gamma^{\ast}}^{\ast} > 0$ on $D$, has been well studied in the past (e.g., see \cite{Day83} in the context of an asymptotic behavior for the principal eigenfunction; and see \cite{Day87} or \cite{BiBor09} in the context of an asymptotic behavior for the equilibrium density).

Before concluding this section, let us introduce the following definition for minimal action state-feedbacks, w.r.t. the action functional $I_T^{\gamma}(\varphi)$, which is useful for the development of our main result.
\begin{definition} \label{D1}
The $n$-tuple $(\gamma_1^{\ast}, \gamma_2^{\ast}, \ldots, \gamma_n^{\ast}) \in \Gamma$ is said to be minimal action state-feedbacks if 
\begin{align}
(\gamma_1^{\ast}, \gamma_2^{\ast}, \ldots, \gamma_n^{\ast}) \in \argmin I_T^{\gamma}(\varphi). \label{Eq1.12}
\end{align}
\end{definition}

In the following section, we present our main results -- where we establish a connection between the asymptotic exit rate with which the controlled diffusion process (w.r.t. each of the input channels) exits from the set $D$ and the asymptotic behavior of principal eigenvalues for a family of singularly perturbed elliptic operators with zero boundary condition on $\partial D$. Later, such a formulation allows us to provide a sufficient condition for the existence of a Pareto equilibrium (i.e., a set of optimal exit rates w.r.t. each of the input channels) for the HJB equations -- where the latter correspond to a family of nonlinear controlled eigenvalue problems (e.g., see \cite{QuaSi08} or \cite[Chapter~8]{GilTr01} for additional discussions on eigenvalue problems).  

\section{Main Results}
In this section, we consider a family of SDEs (w.r.t. each of input channels)
\begin{align}
  dX^{\epsilon, i}(t) = \Bigl (A + \sum\nolimits_{j \neq i} B_j \gamma_j\Bigr)X^{\epsilon, i}(t) dt + B_i u_i(t)dt + \sqrt{\epsilon} \sigma\bigl(X^{\epsilon, i}(t)\bigr) dW(t), \notag \\ X^{\epsilon,i}(0)=x_0, \quad  i =1, 2, \ldots, n, \label{Eq2.1}
\end{align}
where $u_i(\cdot)$ is a $\mathcal{U}_i$-valued measurable control process to the $i$th-channel (i.e., an admissible control from the set \,$\mathcal{U}_i \subset \mathbb{R}^{r_i}$) such that for all $t > s$, $W(t) - W(s)$ is independent of $u_i(\nu)$ for $\nu \le s$ (nonanticipativity condition) and
\begin{align*}
E \int_{s}^{t} \vert u_i(\tau)\vert^2 d\tau < \infty, \quad \forall t \ge s, 
\end{align*}
for $i=1, 2, \dots, n$.

Next, let 
\begin{align}
 \lambda_{\epsilon}^{u} =  \bigl(\lambda_{\epsilon,1}^{u_1},\,  \lambda_{\epsilon,2}^{u_2},\, \ldots, \,\lambda_{\epsilon,n}^{u_n}\bigr),  \label{Eq2.2}
\end{align}
with 
\begin{align*}
 \lambda_{\epsilon, i}^{u_i} =  -\limsup_{t \rightarrow \infty} \frac{1}{t} \log P_{\epsilon,i}^{x_0,u_i}\bigl\{\tau_i^{\epsilon} > t \bigr\}, \quad i=1,2, \ldots, n,
 \end{align*}
where the probability $P_{\epsilon,i}^{x_0,u_i}$ is conditioned on the initial point $x_0 \in D$ and the admissible control $u_i \in \mathcal{U}_i$. Moreover, $\tau_i^{\epsilon}$ is the first exit time for the diffusion process $X^{\epsilon, i}(t)$ from the set $D$, i.e., $\tau_i^{\epsilon} = \inf \bigl \{ t > 0 \, \bigl \vert \, X^{\epsilon, i}(t) \notin D \bigr\}$ for $i=1,2, \ldots, n$.

Further, let us introduce the following set
\begin{equation}
 \Sigma^{\epsilon} =  \Bigl\{ \lambda_{\epsilon}^{u} \in \mathbb{R}_{+}^n \, \Bigl \vert \, \bigl(\gamma_1, \gamma_2, \ldots, \gamma_n \bigr) \in \Gamma \Bigr\}.  \label{Eq2.3}
\end{equation}

\begin{remark}
Notice that the set $\Sigma^{\epsilon}$ (w.r.t. the class of state-feedbacks $\Gamma$) is a closed subset of $\mathbb{R}^n$.
\end{remark}

Define the partial ordering $\prec$ on $\Sigma^{\epsilon}$ by $\lambda_{\epsilon}^{u'} \prec \lambda_{\epsilon}^{u''}$, i.e.,
\begin{equation}
   \bigl(\lambda_{\epsilon,1}^{u_1'}, \lambda_{\epsilon,2}^{u_2'},  \ldots, \lambda_{\epsilon,n}^{u_n'} \bigr) \prec \bigl(\lambda_{\epsilon,1}^{u_1''}, \lambda_{\epsilon,2}^{u_2''},  \ldots, \lambda_{\epsilon,n}^{u_n''} \bigr),  \label{Eq2.4}
\end{equation}
if $\lambda_{\epsilon,i}^{u_i'} \le \lambda_{\epsilon,i}^{u_i''}$ for all $i=1,2, \ldots, n$, with strict inequality for at least one $i \in \{1,2, \dots, n\}$. Further, we say that $\lambda_{\epsilon}^{u^{\ast}} \in \Sigma^{\epsilon}$ is a Pareto equilibrium (w.r.t. the class of state-feedbacks $\Gamma$) if there is no $\lambda_{\epsilon}^u \in \Sigma^{\epsilon}$ for which $\lambda_{\epsilon}^u \prec \lambda_{\epsilon}^{u^{\ast}}$.

Then, we have the following proposition that provides a sufficient condition for the existence of a Pareto equilibrium $\lambda_{\epsilon}^{u^{\ast}} \in \Sigma^{\epsilon}$ (i.e., a set of optimal exit rates w.r.t. each of input channels).

\begin{proposition}\label{P1}
Suppose that the statement in Definition~\ref{D1} holds true at least for one $n$-tuple of state-feedbacks $(\gamma_1^{\ast}, \gamma_2^{\ast}, \ldots, \gamma_n^{\ast}) \in \Gamma$. Then, there exists a Pareto equilibrium $\lambda_{\epsilon}^{u^{\ast}} \in \Sigma^{\epsilon}$ (i.e., a set of optimal exit rates w.r.t. each of input channels) such that 
\begin{equation}
   \bigl(\lambda_{\epsilon,1}^{u_1^{\ast}}, \lambda_{\epsilon,2}^{u_2^{\ast}}, \ldots, \lambda_{\epsilon,n}^{u_n^{\ast}} \bigr) \prec \bigl(\lambda_{\epsilon,1}^{u_1}, \lambda_{\epsilon,2}^{u_2},  \ldots, \lambda_{\epsilon,n}^{u_n} \bigr) \quad \text{on} \quad \Sigma^{\epsilon},  \label{Eq2.5}
\end{equation}
where the principal eigenvalues $\lambda_{\epsilon,i}^{u_i^{\ast}}$ are the unique solutions for the HJB equations corresponding to the following family of nonlinear controlled eigenvalue problems
\begin{align}
\left.\begin{array}{c}
\max_{u_i \in \mathcal{U}_i} \Bigl \{\mathcal{L}_{\epsilon,i}^{u_i}\,\psi_{u_i}^{\ast}(x, u_i) + \lambda_{\epsilon,i}^{u_i^{\ast}}\,\psi_{u_i}^{\ast}(x) = 0 \Bigr \}, \quad \forall x \in D\\
\quad\quad ~ \psi_{u_i}^{\ast}(x) = 0, \quad \forall x \in \partial D\\
\end{array} \right. \label{Eq2.6}
\end{align}
with $\psi_{u_i}^{\ast} \in W_{loc}^{2, p}(D) \cap C(D \cup \partial D)$, for $p > 2$, with $\psi_{u_i}^{\ast}> 0$ on $D$, and
\begin{equation}
  -\mathcal{L}_{\epsilon,i}^{u_i} \bigl(\cdot\bigr) \bigl( x \bigr) = \Bigl \langle \triangledown \bigl(\cdot\bigr),\, \bigl( A + \sum\nolimits_{j \neq i} B_j \gamma_j^{\ast} \bigr)x + B_i u_i\Bigr \rangle + \frac{\epsilon}{2} \operatorname{tr} \Bigl \{\sigma(x) \sigma^T(x)  \triangledown^2 \bigl(\cdot\bigr) \Bigr\}, \label{Eq2.7}
\end{equation}
 for $i =1,2, \ldots, n$.
\end{proposition}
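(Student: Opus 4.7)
The plan is to reduce the multi-channel Pareto problem to $n$ decoupled single-channel controlled eigenvalue problems by freezing the state-feedbacks $\gamma_j^{\ast}$ (supplied by Definition~\ref{D1}) in every channel $j \neq i$ and optimizing only over $u_i \in \mathcal{U}_i$ in channel $i$. Specifically, for each $i$ I would consider the SDE in \eqref{Eq2.1} with $\gamma_j = \gamma_j^{\ast}$ for $j \neq i$ and regard $u_i \mapsto \lambda_{\epsilon,i}^{u_i}$ as the object to be minimized; the minimal-action hypothesis guarantees that the drift $A + \sum_{j \neq i} B_j \gamma_j^{\ast}$ admits a nonempty invariant set in $D \cup \partial D$, so by \eqref{Eq1.10} (Lemma~\ref{L1}) the principal eigenvalue in \eqref{Eq1.6} is well defined, with the Freidlin--Wentzell characterization \eqref{Eq1.8}--\eqref{Eq1.9} providing a variational representation of $r^{\gamma^{\ast}}$ at the deterministic limit.

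Next, I would invoke a Perron--Frobenius / Krein--Rutman argument for fully nonlinear elliptic operators (as developed in \cite{QuaSi08} and \cite[Ch.~8]{GilTr01}) applied to the Hamiltonian in \eqref{Eq2.6}--\eqref{Eq2.7}. Because the diffusion matrix is uniformly nondegenerate ($\sigma \sigma^T \ge \kappa I$) and the controlled drift is affine in $u_i$ on the admissible set $\mathcal{U}_i \subset \mathbb{R}^{r_i}$, the operator $\max_{u_i \in \mathcal{U}_i} \mathcal{L}_{\epsilon,i}^{u_i}$ is uniformly elliptic with continuous, convex (in the gradient) structure. Standard theory then yields a unique principal eigenpair $(\lambda_{\epsilon,i}^{u_i^{\ast}}, \psi_{u_i}^{\ast}) \in \mathbb{R} \times (W_{loc}^{2,p}(D) \cap C(D \cup \partial D))$ with $\psi_{u_i}^{\ast} > 0$ in $D$, together with a measurable maximizer $u_i^{\ast}(x)$ obtained through a standard selection argument. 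A verification argument based on It\^o's formula applied to the exponential change of variable $\psi = \exp(-W/\epsilon)$, combined with the probabilistic representation in \eqref{Eq1.6}, then identifies $\lambda_{\epsilon,i}^{u_i^{\ast}}$ as $\inf_{u_i \in \mathcal{U}_i} \lambda_{\epsilon,i}^{u_i}$ (with the other channels frozen at $\gamma_j^{\ast}$).

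Finally, I would deduce Pareto optimality of $\lambda_{\epsilon}^{u^{\ast}} = (\lambda_{\epsilon,1}^{u_1^{\ast}}, \ldots, \lambda_{\epsilon,n}^{u_n^{\ast}})$ by contradiction: if an admissible profile $u$ produced $\lambda_{\epsilon}^{u} \prec \lambda_{\epsilon}^{u^{\ast}}$, then for some index $i$ with strict inequality we would have $\lambda_{\epsilon,i}^{u_i} < \lambda_{\epsilon,i}^{u_i^{\ast}}$, contradicting the channel-wise minimality established above. Closedness of $\Sigma^{\epsilon}$ noted in the Remark ensures $\lambda_{\epsilon}^{u^{\ast}} \in \Sigma^{\epsilon}$.

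The main technical obstacle will be the second step: proving that the singularly perturbed, fully nonlinear eigenvalue problem \eqref{Eq2.6} has a unique principal eigenpair in the relevant Sobolev class and that this analytic eigenvalue coincides with the probabilistic exit rate $\lambda_{\epsilon,i}^{u_i^{\ast}}$. This requires combining a measurable selection for the Hamiltonian with a strong comparison/maximum principle (to rule out competing eigenpairs with sign-changing eigenfunctions) and a verification theorem in which the boundary behavior of $\psi_{u_i}^{\ast}$ near $\partial D$ must be controlled carefully in order to justify the Feynman--Kac representation linking \eqref{Eq2.6} to \eqref{Eq1.6}.
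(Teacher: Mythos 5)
Your first two steps track the paper's argument reasonably well. Freezing $\gamma_j^\ast$ in channels $j \neq i$ and invoking the Quaas--Sirakov Krein--Rutman theory for the fully nonlinear operator $\max_{u_i}\mathcal{L}_{\epsilon,i}^{u_i}$ to obtain the principal eigenpair $(\lambda_{\epsilon,i}^{u_i^\ast},\psi_{u_i}^\ast)$, together with a measurable selector, is exactly what the paper does (it cites \cite[Theorems~1.1, 1.2, 1.4]{QuaSi08}). The identification of the analytic eigenvalue with the probabilistic exit rate is also the core technical issue, as you flagged; the paper handles the boundary difficulty not via a direct Feynman--Kac / exponential change of variables on $D$ but by enlarging the domain to a smooth $Q \supset D \cup \partial D$, running a supermartingale estimate \eqref{Eq2.11} with the eigenpair on $Q$, and then shrinking $Q$ to $D$ using \cite[Proposition~4.10]{QuaSi08}. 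Your sketch and theirs differ in implementation here, but the goal and the sources are the same, and both routes would require comparable care.

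The genuine gap is in your third step. What you establish in step two is a \emph{channel-wise} or Nash-type property: with the other channels frozen at $\gamma_j^\ast$, no unilateral change of $u_i$ strictly decreases $\lambda_{\epsilon,i}$. Pareto optimality over $\Sigma^\epsilon$ is a strictly stronger requirement, because $\Sigma^\epsilon$ in \eqref{Eq2.3} ranges over the whole class $\Gamma$ of state-feedback profiles (and hence over coupled joint deviations across channels). A competing $\lambda_\epsilon^u$ with $\lambda_{\epsilon,i}^{u_i} < \lambda_{\epsilon,i}^{u_i^\ast}$ in some coordinate may be generated by a drift $A+\sum_{j\neq i}B_j\gamma_j$ with $\gamma_j \neq \gamma_j^\ast$, and this does not contradict your channel-$i$ minimality, which was proved only for the profile $\gamma^\ast$ frozen. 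Your ``by contradiction'' step therefore does not close. The paper instead proves existence of a Pareto point by linear scalarization: it introduces the utility $U^\epsilon(\lambda_\epsilon^u)=\langle\omega,\lambda_\epsilon^u\rangle$ with $\omega_i>0$, observes that $U^\epsilon$ is strictly monotone in the partial order $\prec$ so any minimizer of $U^\epsilon$ over the closed set $\Sigma^\epsilon$ is Pareto optimal, and invokes the Arrow--Barankin--Blackwell theorem \cite{ArroBB53} for density of such scalarized minimizers in the Pareto set. You would need this (or an equivalent Zorn-type / compactness argument on $\Sigma^\epsilon$) rather than the decoupled contradiction to complete the proof.
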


\begin{proof} 
Suppose there exists an $n$-tuple of state-feedbacks $(\gamma_1^{\ast}, \gamma_2^{\ast}, \ldots, \gamma_n^{\ast}) \in \Gamma$ that satisfies the statement in Definition~\ref{D1}. Then, our first claim for $\psi_{u_i}^{\ast} \in W_{loc}^{2,p} \bigl(D\bigr) \cap C\bigl(D \cup \partial D \bigr)$, with $p>2$, follows from Equation~\eqref{Eq2.6} (cf. \cite[Theorems~1.1, 1.2 and 1.4]{QuaSi08}). That is, if $u_i^{\ast}$ for $i \in \{1,2, \ldots, N\}$, is a measurable selector of $\argmax \bigl\{\mathcal{L}_{\epsilon, i}^{u_i} \psi_{u_i}^{\ast} \bigl(x\bigr) \bigr\}$, with $x \in D$. Then, by the uniqueness claim for the eigenvalue problem (cf. Equation~\eqref{Eq1.11}), we have
\begin{align}
\lambda_{\epsilon, i}^{u_i^{\ast}} = - \limsup_{t \rightarrow \infty} \frac{1} {t} \log P_{\epsilon,i}^{x_0, u_i^{\ast}} \Bigl\{\tau_i^{\epsilon} > t \Bigr\}, \label{Eq2.8}
\end{align}
where the probability $P_{\epsilon,i}^{x_0, u_i^{\ast}}$ is conditioned on $x_0$, $u_i^{\ast}$ and $\bigl(\gamma_1^{\ast}, \dots, \gamma_{i-1}^{\ast}, \gamma_{i+1}^{\ast}, \ldots, \gamma_n^{\ast}\bigr)$. Moreover, for any other admissible controls $v_i \in \mathcal{U}_i$, we have
\begin{align}
\mathcal{L}_{\epsilon, i}^{u_i^{\ast}} \psi_{u_i}^{\ast} \bigl(x, v_i \bigr) + \lambda_{\epsilon, i}^{u_i^{\ast}} \psi_{u_i}^{\ast} \bigl(x\bigr) \le 0, \quad \forall t \ge 0. \label{Eq2.9}
\end{align}
Let $Q \subset \mathbb{R}^d$ be a smooth bounded open domain containing $D \cup \partial D$. Let $\hat{\psi}_{v_i}$ and  $\hat{\lambda}_{\epsilon,i}^{v_i}$ be the principal eigenfunction-eigenvalue pairs for the eigenvalue problem of $\mathcal{L}_{\epsilon, i}^{v_i}$ on $\partial Q$. Further, let 
\begin{align}
\hat{\tau}_i^{\epsilon} = \inf \Bigl\{ t > 0 \, \bigl\vert \, X^{\epsilon, i}(t) \notin Q \Bigr\}. \label{Eq2.10}
\end{align}
Then, under $v_i$, for $i \in \{1,2, \ldots, N\}$ and for some $\hat{x} \in D$, we have  
\begin{align}
 \hat{\psi}_{v_i} \bigl(\hat{x}\bigr) &\ge E_{\epsilon,i}^{x_0, v_i} \Biggl\{\exp\bigl( \hat{\lambda}_{\epsilon, i}^{v_i} t \bigr) \,\hat{\psi}_{v_i} \bigl(x(t)\bigr) \mathbf{1}\Bigl\{\hat{\tau}_i^{\epsilon} > t\Bigr\} \Biggr\},  \notag\\
  & \ge  \inf_{x  \in D}\Bigl\vert \hat{\psi}_{v_i} \bigl(x(t)\bigr) \Bigr\vert  \exp\bigl(\hat{\lambda}_{\epsilon,i}^{v_i}\, t\bigr) P_{\epsilon,i}^{x_0, u_i^{\ast}}\Bigl\{\tau_i^{\epsilon} > t\Bigr\}. \label{Eq2.11}
\end{align}
Leading to
\begin{align}
\hat{\lambda}_{\epsilon, i}^{v_i} \ge - \limsup_{t \rightarrow \infty} \frac{1} {t} \log P_{\epsilon,i}^{x_0, u_i^{\ast}} \Bigl\{\tau_i^{\epsilon} > t \Bigr\}. \label{Eq2.12}
\end{align}
Letting $Q$ shrink to $D$ and using Proposition~4.10 of \cite{QuaSi08}, then we have $\hat{\lambda}_{\epsilon, i}^{v_i} \rightarrow \lambda_{\epsilon,i}^{u_i^{\ast}}$.Thus, we have
\begin{align}
\lambda_{\epsilon,i}^{u_i^{\ast}} \le - \limsup_{t \rightarrow \infty} \frac{1} {t} \log P_{\epsilon,i}^{x_0, u_i^{\ast}} \Bigl\{\tau_i^{\epsilon} > t \Bigr\}. \label{Eq2.13}
\end{align}
Combining with Equation~\eqref{Eq2.8}, this establishes the optimality of $u_i^{\ast}$ and the fact that $\lambda_{\epsilon,i}^{u_i^{\ast}}$ is the optimal exit rate.

Conversely, let $v_i^{\ast}$ be an admissible optimal control, then we have
\begin{align}
\mathcal{L}_{\epsilon, i}^{u_i^{\ast}} \psi_{v_i^{\ast}}^{\ast} \bigl(x, v_i^{\ast}\bigr) + \hat{\lambda}_{\epsilon, i}^{v_i^{\ast}} \, \psi_{v_i^{\ast}}^{\ast} \bigl(x \bigr) = 0 \label{Eq2.14}
\end{align}
and
\begin{align}
\mathcal{L}_{\epsilon, i}^{u_i^{\ast}} \psi_{u_i^{\ast}}^{\ast} \bigl(x, v_i^{\ast}\bigr) + \lambda_{\epsilon, i}^{u_i^{\ast}} \, \psi_{u_i^{\ast}}^{\ast} \bigl(x \bigr) \le 0, \quad \forall t > 0, \label{Eq2.15}
\end{align}
with $\hat{\lambda}_{\epsilon, i}^{v_i^{\ast}} = \lambda_{\epsilon,i}^{u_i^{\ast}}$, for $i = 1, 2, \ldots, n$.

Further, notice that $\psi_{u_i^{\ast}}^{\ast}$ is a scalar multiple of $\psi_{v_i^{\ast}}^{\ast}$ and, at some $\hat{x} \in D$ (cf. \cite[Theorem~1.4(a)]{QuaSi08}). Then, we see that $v_i^{\ast}$ is also a maximizing measurable selector in Equation~\eqref{Eq2.6}.

On the other hand, for a fixed small $\epsilon$, define the following continuous functional (i.e., a utility function over a closed set $\Sigma^{\epsilon} \in \mathbb{R}_{+}^n$ that can be convexfied)
\begin{align}
\Sigma^{\epsilon} \ni \lambda_{\epsilon}^{u} \mapsto U^{\epsilon} \bigl( \lambda_{\epsilon}^{u} \bigr) \triangleq \bigl\langle \omega,\, \lambda_{\epsilon}^{u}\bigr\rangle \in \mathbb{R}, \label{Eq2.16}
\end{align}
where $\omega_i > 0$ for $i=1,2,\ldots, n$. 

Note that the utility function $U^{\epsilon}$ satisfies the property $U^{\epsilon}\bigl(\lambda_{\epsilon}^{u'}\bigr) < U^{\epsilon}\bigl(\lambda_{\epsilon}^{u''}\bigr)$, whenever $\lambda_{\epsilon}^{u'} \prec \lambda_{\epsilon}^{u''}$ on $\Sigma^{\epsilon}$ w.r.t. the class of state-feedbacks $\Gamma$. Then, from the Arrow-Barankin-Blackwell theorem (e.g., see \cite{ArroBB53}), one can see that the set in
\begin{align}
\biggl \{ \lambda_{\epsilon}^{u} \in \Sigma^{\epsilon} \, \Bigl \vert \,  \exists \,\omega_i >0, \,\, i=1,2, \ldots, n, \,\, \min_{\lambda_{\epsilon}^{u} \in \Sigma^{\epsilon}} \bigl\langle \omega,\, \lambda_{\epsilon}^{u}\bigr\rangle = \bigl\langle \omega,\, \lambda_{\epsilon}^{u^{\ast}} \bigr\rangle  \biggr\} \label{Eq2.17}
\end{align}
is dense in the set of all Pareto equilibria. This further implies that, for any choice of $\omega_i > 0$, $i=1,2, \ldots, n$, the minimizer $\bigl\langle \omega,\, \lambda_{\epsilon}^{u}\bigr\rangle = \sum_{i=1}^n \omega_i \lambda_{\epsilon,i}^{u_i}$ over $\Sigma^{\epsilon}$ satisfies the Pareto equilibrium condition w.r.t. some minimal action state-feedbacks $(\gamma_1^{\ast}, \gamma_2^{\ast}, \ldots, \gamma_n^{\ast}) \in \Gamma$. This completes the proof of Proposition~\ref{P1}.
\end{proof}

We conclude this section with the following corollary that gives a lower-bound for $r^{\gamma^{\ast}}$ in \eqref{Eq1.8} (w.r.t. some minimal action state-feedbacks $(\gamma_1^{\ast}, \gamma_2^{\ast}, \ldots, \gamma_n^{\ast}) \in \Gamma$) (cf. \cite[Corollary~11.2,~pp.~377]{Fre06})
\begin{corollary} \label{C1} 
Suppose that the statement in Proposition~\ref{P1} holds. Then, $r^{\gamma^{\ast}}$ in \eqref{Eq1.8}, w.r.t. the minimal action state-feedbacks $(\gamma_1^{\ast}, \gamma_2^{\ast}, \ldots, \gamma_n^{\ast}) \in \Gamma$, satisfies\footnote{Notice that
\begin{align*}
r^{\gamma^{\ast}} = \limsup_{T \rightarrow \infty} \,\inf_{\varphi(t) \in \Psi} \frac{1}{T} \biggl \{ I_{T}^{\gamma^{\ast}}(\varphi(t)) \, \Bigl \vert \, \varphi(t) \in D \cup \partial D,\,\,  t \in [0, T] \biggr\},
\end{align*}
where
\begin{align*}
 I_{T}^{\gamma^{\ast}}(\varphi(t)) =  \frac{1}{2} \int_{0}^{T} \biggl[\frac{d\varphi(t)}{dt} &-  \bigl(A +  \sum\nolimits_{i=1}^n B_i \gamma_i^{\ast}\bigr)\varphi(t) \biggr]^T \Bigl(\sigma(\varphi(t))\sigma^T(\varphi(t))\Bigr)^{-1}\notag \\
&  \quad\quad\quad  \times  \biggl[\frac{d\varphi(t)}{dt} -  \bigl(A +  \sum\nolimits_{i=1}^n B_i \gamma_i^{\ast}\bigr)\varphi(t) \biggr] dt. 
\end{align*}}
\begin{align}
  r^{\gamma^{\ast}} \ge \max_{i \in \{1,2,\ldots, n\}}  r^{u_i^{\ast}},  \label{Eq2.18}
\end{align}
where $r^{u_i^{\ast}}$ is given by
\begin{align}
  r^{u_i^{\ast}} = - \epsilon \log \lambda_{\epsilon, i}^{u_i^{\ast}}  \quad {\text as} \quad \epsilon \rightarrow 0. \label{Eq2.19}
\end{align}
\end{corollary}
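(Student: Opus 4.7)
My plan is to combine the Freidlin--Wentzell large-deviation identification from Lemma~\ref{L1} of the Appendix (cf.~\cite[Theorem~2.1]{Kif81}) with the Pareto-equilibrium characterization produced by Proposition~\ref{P1}. First, for each $i\in\{1,\ldots,n\}$ I would apply the Kifer asymptotic~\eqref{Eq1.10} to the $i$th-channel SDE~\eqref{Eq2.1} driven by the optimal control $u_i^{\ast}$ provided by Proposition~\ref{P1}, and, analogously, to the multi-channel SDE~\eqref{Eq1.4} under the state-feedbacks $(\gamma_1^{\ast},\ldots,\gamma_n^{\ast})\in\Gamma$. Using that $\Lambda(D)\neq\emptyset$ together with \eqref{Eq2.8}, this yields \eqref{Eq2.19} and simultaneously the variational representations
\begin{equation*}
r^{u_i^{\ast}} \;=\; \limsup_{T\to\infty}\inf_{\varphi\in\Psi}\frac{1}{T}\, I_T^{u_i^{\ast}}(\varphi),\qquad r^{\gamma^{\ast}} \;=\; \limsup_{T\to\infty}\inf_{\varphi\in\Psi}\frac{1}{T}\, I_T^{\gamma^{\ast}}(\varphi),
\end{equation*}
where $I_T^{u_i^{\ast}}$ is built from the $i$th-channel drift $(A+\sum_{j\neq i}B_j\gamma_j^{\ast})\varphi + B_i u_i^{\ast}(\varphi)$, and $I_T^{\gamma^{\ast}}$ is the one in the footnote to the corollary.

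Second, I would establish, for every $\varphi\in\Psi$ and every $i$, the pointwise action comparison
\begin{equation*}
I_T^{\gamma^{\ast}}(\varphi) \;\ge\; I_T^{u_i^{\ast}}(\varphi).
\end{equation*}
The ingredients are the minimal-action property of $(\gamma_1^{\ast},\ldots,\gamma_n^{\ast})$ from Definition~\ref{D1} (which selects $(A+\sum_j B_j\gamma_j^{\ast})\varphi$ as the $\argmin$ drift among linear feedbacks from $\Gamma$) together with the HJB $\argmax$ property of $u_i^{\ast}$ in~\eqref{Eq2.6} (which, combined with the scalar-multiple identification $\psi_{u_i^{\ast}}^{\ast} = c_i\psi_{v_i^{\ast}}^{\ast}$ exhibited in the proof of Proposition~\ref{P1} and in \cite[Theorem~1.4(a)]{QuaSi08}, pins down the optimal single-channel drift). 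Once the pointwise bound is in hand, passing to $\inf_\varphi$ and then $\limsup_T$ preserves the inequality, so $r^{\gamma^{\ast}}\ge r^{u_i^{\ast}}$ channel by channel, and taking the maximum over $i$ delivers \eqref{Eq2.18}.

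The main obstacle will be the pointwise action comparison in the second step, where the correct sign of $I_T^{\gamma^{\ast}}(\varphi) - I_T^{u_i^{\ast}}(\varphi)$ must be extracted. The HJB characterization of $u_i^{\ast}$ is stated at the level of the singularly perturbed operator $\mathcal{L}_{\epsilon,i}^{u_i}$ for fixed $\epsilon>0$, whereas the target inequality lives at the level of the deterministic action functional that emerges in the $\epsilon\to 0$ limit. I would bridge the two by a Hamilton--Jacobi logarithmic transform $\psi = e^{-V/\epsilon}$, extract a deterministic Hamilton--Jacobi equation in the limit, and compare the two resulting Hamiltonians by exploiting the $Q\searrow D$ stability of principal eigenvalues from \cite[Proposition~4.10]{QuaSi08} already used in Proposition~\ref{P1}. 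The remaining estimates are routine consequences of the uniform ellipticity of $\sigma\sigma^T$ assumed in~\eqref{Eq1.4}.
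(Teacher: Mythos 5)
The paper itself does not supply a proof of Corollary~\ref{C1}: it is stated bare, with a pointer to \cite[Corollary~11.2,~pp.~377]{Fre06} and a one-line Remark afterwards. So there is no paper proof to compare against, and the burden is entirely on your argument.

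Your argument does not close. The entire weight is carried by the ``pointwise action comparison'' $I_T^{\gamma^{\ast}}(\varphi)\ge I_T^{u_i^{\ast}}(\varphi)$ for \emph{every} trajectory $\varphi\in\Psi$, and you offer no derivation of it --- you only flag it as the ``main obstacle'' and gesture at a logarithmic transform. In fact the two ingredients you name pull in the wrong direction. Definition~\ref{D1} says $(\gamma_1^{\ast},\ldots,\gamma_n^{\ast})\in\argmin I_T^{\gamma}(\varphi)$, i.e.\ the minimal-action feedback makes $I_T^{\gamma^{\ast}}(\varphi)$ \emph{small} among feedbacks, which is no help at all in showing that $I_T^{\gamma^{\ast}}(\varphi)$ dominates the action under the $i$th-channel drift. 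And the HJB $\argmax$ characterization of $u_i^{\ast}$ is a statement about the \emph{optimized} eigenvalue $\lambda_{\epsilon,i}^{u_i^{\ast}}$ (i.e.\ about $\inf_\varphi$ of the time-averaged action), not about the action of an arbitrary trajectory $\varphi$ against the drift $(A+\sum_{j\ne i}B_j\gamma_j^{\ast})\varphi+B_iu_i^{\ast}(\varphi)$. For a generic $\varphi$ the sign of $I_T^{\gamma^{\ast}}(\varphi)-I_T^{u_i^{\ast}}(\varphi)$ is simply indeterminate, because the drifts differ by $B_i(\gamma_i^{\ast}\varphi-u_i^{\ast}(\varphi))$ and the quadratic action form does not order these two expressions uniformly.

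There is a further, more basic concern you must address. The Remark immediately following the Corollary asserts $\lambda_{\epsilon,0}^{\gamma}-\lambda_{\epsilon,i}^{u_i^{\ast}}\ge 0$ for every $(\gamma_1,\ldots,\gamma_n)\in\Gamma$ and every $\epsilon\in(0,\epsilon^{\ast})$ --- which is indeed what Proposition~\ref{P1} should give, since the static feedback $u_i=\gamma_i^{\ast}x$ is one of the admissible controls over which $u_i^{\ast}$ optimizes. But combined with the Freidlin--Wentzell asymptotic $r^{\gamma}=-\epsilon\log\lambda_{\epsilon,0}^{\gamma}+o(1)$ (stated below \eqref{Eq1.7}) and with \eqref{Eq2.19}, the inequality $\lambda_{\epsilon,0}^{\gamma^{\ast}}\ge\lambda_{\epsilon,i}^{u_i^{\ast}}$ yields $r^{\gamma^{\ast}}\le r^{u_i^{\ast}}$ for each $i$, the \emph{reverse} of \eqref{Eq2.18}. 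A correct proof of the Corollary as written must explain why this elementary deduction does not apply; alternatively, if the intended statement is $r^{\gamma^{\ast}}\le\min_i r^{u_i^{\ast}}$, then the natural (and short) argument is exactly the one just sketched --- pass the $\epsilon$-level eigenvalue comparison from the Remark through the $\epsilon\to 0$ limit --- and the elaborate pointwise-action machinery is unnecessary. As it stands, the proposal has a genuine gap at its central step and does not engage with the sign tension built into the surrounding statements.
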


\begin{remark} 
Here, we remark that, for each $i \in \{1, 2, \ldots, n\}$, we have
\begin{align*}
 \lambda_{\epsilon,0}^{\gamma} -  \lambda_{\epsilon, i}^{u_i^{\ast}} \ge 0, \quad \forall \epsilon \in (0, \epsilon^{\ast}), \quad (\gamma_1, \gamma_2, \ldots, \gamma_n) \in \Gamma,
\end{align*}
which suggests that the above corollary is useful for selecting the most appropriate minimal action state-feedbacks from $\Gamma$ that confine the diffusion process $X^{\epsilon}(t)$ to the prescribed set $D$ for a longer duration (cf. Equations~\eqref{Eq1.6} and \eqref{Eq2.2}).
\end{remark}

\section*{Appendix}
The following lemma (whose proof is an adaptation of \cite[Theorem~2.1]{Kif81}) provides a condition under which the maximal invariant set $\Lambda(D)$ is nonempty.
\begin{lemma}\label{L1}
\begin{enumerate} [(a)]
\item If for some $x_0 \in D$,
\begin{equation}
  - \limsup_{\epsilon \rightarrow 0} \limsup_{t \rightarrow \infty} \frac{1}{t} \log P_{\epsilon,0}^{x_0,\gamma}\Bigl\{\tau^{\epsilon,\gamma} > t \Bigr\} < \infty,  \label{EqA.1} \tag{A.1}
\end{equation}
then the maximal invariant set $\Lambda(D)$ is nonempty.
\item If for some $x_0 \in D$,
\begin{equation}
  \limsup_{\epsilon \rightarrow 0} \log E_{\epsilon,0}^{x_0,\gamma}\Bigl\{\tau^{\epsilon,\gamma} \Bigr\} = \infty,  \label{EqA.2} \tag{A.2}
\end{equation}
then the maximal invariant set $\Lambda(D)$ is nonempty.
\end{enumerate}
\end{lemma}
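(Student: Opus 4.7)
The plan is to prove both parts of Lemma~\ref{L1} by contrapositive: assuming $\Lambda(D) = \emptyset$, I will show that the exit rate of $X^{\epsilon,\gamma}$ from $D$ diverges as $\epsilon \to 0$, which contradicts both \eqref{EqA.1} and \eqref{EqA.2}. The first step is to extract a uniform deterministic exit time. If some orbit $\{S^t x : t \ge 0\}$ of the deterministic flow $S^t$ remained in the compact set $D \cup \partial D$ for all $t \ge 0$, its $\omega$-limit set would be a nonempty compact invariant subset of $D \cup \partial D$, contradicting $\Lambda(D) = \emptyset$. Hence every orbit must leave $D \cup \partial D$ in finite time; by continuity of $(t,x) \mapsto S^t x$ together with compactness of $D \cup \partial D$, one can extract a uniform exit time $T_0 < \infty$ and a positive margin $\delta_0 > 0$ such that $\operatorname{dist}(S^{T_0} x,\, D \cup \partial D) > 2\delta_0$ for every $x \in D \cup \partial D$.

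Next, I would invoke the Freidlin--Wentzell large-deviations principle for the SDE \eqref{Eq1.4}, with rate function $I_{T_0}^{\gamma}$ given by \eqref{Eq1.9}. Since the deterministic orbit from any starting point $x \in D \cup \partial D$ lies at distance at least $2\delta_0$ from $D \cup \partial D$ at time $T_0$, the event $\{\tau^{\epsilon,\gamma} > T_0\}$ forces the sample path deviation $\sup_{t \in [0,T_0]} |X^{\epsilon,\gamma}(t) - S^t x_0| > \delta_0$. The LDP upper bound then yields constants $C, c > 0$, independent of the starting point in $D \cup \partial D$, such that $P_{\epsilon,0}^{x_0,\gamma}\{\tau^{\epsilon,\gamma} > T_0\} \le C \exp(-c/\epsilon)$. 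Applying the strong Markov property at the successive times $T_0, 2T_0, \ldots$, this estimate iterates to $P_{\epsilon,0}^{x_0,\gamma}\{\tau^{\epsilon,\gamma} > kT_0\} \le (C e^{-c/\epsilon})^k$ for every integer $k \ge 1$.

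Taking logarithms and letting $k \to \infty$ for part (a) gives
\begin{equation*}
-\limsup_{t \to \infty} \frac{1}{t}\log P_{\epsilon,0}^{x_0,\gamma}\bigl\{\tau^{\epsilon,\gamma} > t \bigr\} \,\ge\, \frac{c}{T_0\,\epsilon} - \frac{\log C}{T_0},
\end{equation*}
whose right-hand side diverges as $\epsilon \to 0$, contradicting \eqref{EqA.1}. For part (b), integrating the same tail bound yields $E_{\epsilon,0}^{x_0,\gamma}\{\tau^{\epsilon,\gamma}\} \le T_0 \sum_{k \ge 0}(C e^{-c/\epsilon})^k$, which stays uniformly bounded for all sufficiently small $\epsilon$, so $\log E_{\epsilon,0}^{x_0,\gamma}\{\tau^{\epsilon,\gamma}\}$ cannot tend to $\infty$, contradicting \eqref{EqA.2}. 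The hardest part of the proof is the uniform deterministic exit time in the first step: a priori some orbits may linger arbitrarily long near $\partial D$ before escaping, so one must combine the $\omega$-limit-set argument with an open-cover/compactness step to secure both a uniform $T_0$ and a strictly positive margin $\delta_0$. Once that geometric input is in place, the stochastic tail estimate follows the Kifer-style large-deviations template of \cite{Kif81}.
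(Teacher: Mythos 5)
Your proposal is correct and follows the same overall contrapositive strategy as the paper, but the two arguments source the key uniform deterministic-exit lemma quite differently, and the probabilistic input is also different.

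On the geometric side, you use an $\omega$-limit-set argument (any orbit trapped forever in the compact set $D \cup \partial D$ would have a nonempty compact invariant $\omega$-limit set there, contradicting $\Lambda(D)=\emptyset$) followed by a continuity/compactness covering step. The paper instead runs the Kifer-style nested-domain argument: it shrinks a family $D_1 \supset D_2 \supset \cdots$ down to $D \cup \partial D$ and shows that if every $\Lambda(D_m)$ were nonempty, their intersection would be a nonempty invariant set inside $D \cup \partial D$; hence some $\Lambda(D_{m_0})=\emptyset$, and the enlarged domain $\tilde D = D_{m_0}$ automatically supplies a margin around $D$, after which upper-semicontinuity of the deterministic exit time from $\tilde D$ gives the uniform $T$. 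Both routes deliver the same statement, but the paper's domain-enlargement trick packages the margin and the uniformity in one step, whereas your route has to produce them separately. One small caution in your formulation: the claim that there exist $T_0$ and $\delta_0$ with $\operatorname{dist}(S^{T_0}x, D\cup\partial D) > 2\delta_0$ for every $x$ at the \emph{same} time $T_0$ is stronger than what the covering argument actually yields and stronger than what you need; some orbits may have already re-entered a neighborhood of $D$ by time $T_0$. The compactness/covering step gives only that for each $x$ there is \emph{some} time $t_x \le T_0$ at which the orbit is at distance greater than $2\delta_0$, and that weaker version is exactly what your next step (that $\{\tau^{\epsilon,\gamma} > T_0\}$ forces a path deviation of size $\delta_0$ somewhere on $[0,T_0]$) actually requires, so the downstream argument is unaffected.

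On the probabilistic side, you invoke the Freidlin--Wentzell LDP to get the quantitative bound $P_{\epsilon,0}^{x_0,\gamma}\{\tau^{\epsilon,\gamma}>T_0\} \le C e^{-c/\epsilon}$, whereas the paper uses only the weaker convergence-in-probability statement (\ref{EqA.10}) that $\sup_{x_0\in D} P_{\epsilon,0}^{x_0,\gamma}\{\tau^{\epsilon,\gamma}>T\} \to 0$. Both then iterate via the Markov property and both suffice: the paper's version makes the exit rate diverge to $-\infty$ qualitatively, while yours gives the explicit rate $c/(T_0\epsilon)$. Your LDP version is a stronger tool than necessary for the lemma, but it is valid and yields the same contradiction with \eqref{EqA.1} and \eqref{EqA.2}.
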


\begin{proof} Suppose that the maximum closed invariant set $\Lambda(D)$ is empty (i.e., the invariant set for $S^t$ in $D \cup \partial D$ is empty). Then, there exists an open bounded domain $\tilde{D} \supset D \cup \partial D$ such that the corresponding set $\Lambda(\tilde{D})$ is also empty.

Note that it is easy to check that if $D_{2} \subset D_{1}$, then $\Lambda(D_{2}) \subset \Lambda(D_{1})$. Take the following sequence $\bigl\{D_{m}\bigr\}$ of open domains such that
\begin{align}
D_{1} \supset D_{2} \supset D_{3} \supset \cdots \quad \text{and} \quad \bigcap\nolimits_{m \ge 1} D_{m} = D \cup \partial D. \label{EqA.3} \tag{A.3}
\end{align}
If $\Lambda(D_{m})\neq \emptyset$ for all $m \ge 1$, then
\begin{align}
\Lambda = \bigcap\nolimits_{m \ge 1} \Lambda(D_{m}). \label{EqA.4} \tag{A.4}
\end{align}
Moreover, since $\Lambda(D_{m})$ is closed, we have
\begin{align}
\Lambda(D_{1}) \supset \Lambda(D_{2}) \supset \Lambda(D_{3}) \supset  \cdots. \label{EqA.5} \tag{A.5}
\end{align}
Note that $\Lambda$ is an invariant closed set for the unperturbed system
\begin{align*}
\dot{x}^{\gamma}(t) = \Bigl (A + \sum\nolimits_{i=1}^n B_i \gamma_i\Bigr)x^{\gamma}(t), \quad (\gamma_1, \gamma_2, \ldots, \gamma_n) \in \Gamma, \quad x_0^{\gamma}=x_0
\end{align*}
and $\Lambda \supset D \cup \partial D$. Thus, $\emptyset \neq \Lambda \subset \Lambda(D)$. This contradicts our earlier assumption. Then, for some $m_0 \ge 1$, we have 
\begin{align}
\Lambda(D_{m_0}) = \emptyset. \label{EqA.6} \tag{A.6}
\end{align}
Let $\tilde{D} = D_{m_0}$, for any $\epsilon \in (0,\epsilon^{\ast})$ and $x_0 \in \tilde{D} \cup \partial \tilde{D}$, introduce the following
\begin{align}
\tau_{\tilde{D}}^{\epsilon,\gamma} = \inf \Bigl\{ t > 0 \, \bigl\vert \, \exp \Bigl\{\Bigl( A + \sum\nolimits_{i=1}^n B_i \gamma_i\Bigr)t\Bigr\} x_0 \notin \tilde{D} \cup \partial \tilde{D} \Bigr\}. \label{EqA.7} \tag{A.7}
\end{align}
Then, we can show that $\tau_{\tilde{D}}^{\epsilon,\gamma} < \infty$ for any $x_0 \in \tilde{D} \cup \partial \tilde{D}$. 

Notice that, if $\tau_{\tilde{D}}^{\epsilon,\gamma} =\infty$, then $\tilde{D} \cup \partial \tilde{D} \ni \exp \Bigl\{\Bigl( A + \sum\nolimits_{i=1}^n B_i \gamma_i\Bigr)t\Bigr\}x_0$ for all $t \ge 0$. Then, we have the following
\begin{align}
\Bigl\{ \exp \Bigl\{\Bigl( A + \sum\nolimits_{i=1}^n B_i \gamma_i\Bigr)t\Bigr\}x_0, \,\, t \ge 0 \Bigr\} \subset \Lambda(\tilde{D}) = \emptyset, \label{EqA.8} \tag{A.8}
\end{align}
which show that $\tau_{\tilde{D}}^{\epsilon,\gamma}$ is finite.

Note that, from upper-semicontinuity of $\tau_{\tilde{D}}^{\epsilon,\gamma}$, we have
\begin{align}
T = \sup_{x_0 \in \tilde{D} \cup \partial \tilde{D}} \tau_{\tilde{D}}^{\epsilon,\gamma} < \infty. \label{EqA.9} \tag{A.9}
\end{align}
Moreover, for any $\delta > 0$, let
\begin{align}
\lim_{\epsilon \rightarrow 0} \sup_{x_0 \in D} E_{\epsilon,0}^{x_0,\gamma} \Bigl\{ \operatorname{dist} \Bigl(X^{\epsilon,\gamma}(t), \exp \Bigl\{\Bigl( A + \sum\nolimits_{i=1}^n B_i \gamma_i\Bigr)t\Bigr\}x_0 \Bigr) > \delta \Bigr\} = 0, \,\, t \ge 0. \label{EqA.10} \tag{A.10}
\end{align}
From Equations~\eqref{EqA.7}--\eqref{EqA.10}, we have
\begin{align}
\sup_{x_0 \in D} P_{\epsilon,0}^{x_0,\gamma} \Bigl\{ \tau^{\epsilon,\gamma} > T \Bigr\} \rightarrow 0 \quad \text{as} \quad \epsilon \rightarrow 0. \label{EqA.11} \tag{A.11}
\end{align}
Then, using the Markov property, we have
\begin{align}
 P_{\epsilon,0}^{x_0,\gamma} \Bigl\{ \tau^{\epsilon,\gamma} > \ell T \Bigr\} &= E_{\epsilon,0}^{x_0,\gamma} \chi_{\tau^{\epsilon,\gamma} > T} E_{\epsilon,0}^{x_0,\gamma} \chi_{\tau^{\epsilon,\gamma} > T} \cdots E_{\epsilon,0}^{x_0,\gamma} \chi_{\tau^{\epsilon,\gamma} > T}, \notag \\
 & \le \Bigl(\sup_{x_0 \in D} P_{\epsilon,0}^{x_0,\gamma} \Bigl\{ \tau^{\epsilon,\gamma} > T \Bigr\} \Bigr)^{\ell}, \label{EqA.12} \tag{A.12}
\end{align}
where $\chi_{\mathcal{A}}$ is the indicator for the event $\mathcal{A}$.

Since $P_{\epsilon,0}^{x_0,\gamma} \Bigl\{ \tau^{\epsilon,\gamma} > t \Bigr\}$ decreases in $t$, then we have
\begin{align}
\limsup_{t \rightarrow \infty} \frac{1}{t} \log P_{\epsilon,0}^{x_0,\gamma} \Bigl\{ \tau^{\epsilon,\gamma} > t \Bigr\} \le  \frac{1}{T} \log \Bigl(\sup_{x_0 \in D} P_{\epsilon,0}^{x_0,\gamma} \Bigl\{ \tau^{\epsilon,\gamma} > T \Bigr\}\Bigr). \label{EqA.13} \tag{A.13}
\end{align}
Taking into account Equation~\eqref{EqA.11}, then, for any $x_0 \in D$, we have the following
\begin{align}
  \limsup_{t \rightarrow \infty} \frac{1}{t} \log P_{\epsilon,0}^{x_0,\gamma} \Bigl\{\tau^{\epsilon,\gamma} > t \Bigr\}  \rightarrow -\infty \quad \text{as} \quad \epsilon \rightarrow 0. \label{EqA.14} \tag{A.14}
\end{align}
Hence, our assumption that $\Lambda(D) = \emptyset$ is inconsistent.

To proof part (ii), notice that
\begin{align}
 E_{\epsilon,0}^{x_0,\gamma} \Bigl\{ \tau^{\epsilon,\gamma} \Bigr\} \le T\sum\nolimits_{\ell=1}^{\infty} P_{\epsilon,0}^{x_0,\gamma} \Bigl\{ \tau^{\epsilon,\gamma} > (\ell-1)T \Bigr\}. \label{EqA.15}\tag{A.15}
\end{align}
Assumption  $\Lambda(D )= \emptyset$ gives, in view of Equations~\eqref{EqA.11}, \eqref{EqA.12} and \eqref{EqA.13} for sufficiently small $\epsilon > 0$ and for any $x_0 \in D$,
\begin{align}
 E_{\epsilon,0}^{x_0,\gamma} \Bigl\{ \tau^{\epsilon,\gamma} \Bigr\} &\le T\sum\nolimits_{\ell=1}^{\infty} \Bigl[\sup_{x_0 \in D} P_{\epsilon,0}^{x_0,\gamma} \Bigl\{ \tau^{\epsilon,\gamma} > T \Bigr\}\Bigr]^{\ell-1} \notag \\
 &< \infty, \label{EqA.16} \tag{A.16}
\end{align}
which contradicts with Equation~\eqref{EqA.2}. This completes the proof of Lemma~\ref{L1}. 
\end{proof}

\end{document}